\definecolor{red}{rgb}{0.7,0.15,0.15}
\definecolor{green}{rgb}{0,0.5,0}
\definecolor{blue}{rgb}{0,0,0.7}
\makeatletter \@addtoreset{equation}{section}
\newtheorem{theorem}{Theorem}[section]
\newtheorem{lemma}[theorem]{Lemma}
\newtheorem{definition}[theorem]{Definition}
\newtheorem{remark}[theorem]{Remark}
\newcommand{\vertiii}[1]{{\left\vert\kern-0.25ex\left\vert\kern-0.25ex\left\vert #1 \right\vert\kern-0.25ex\right\vert\kern-0.25ex\right\vert}}
\begin{document}

\title{New proofs to measurable, predictable and optional section theorems\footnote{Funded by H.F.R.I., 3rd Call for H.F.R.I. Scholarships for PHD Canditates, project i.d. 05724.}}

\author{Stefanos {Theodorakopoulos}\footnote{Department of Mathematics, NTUA, Zografou Campus, 15780 Athens, Greece, steftheodorakopoulos@mail.ntua.gr}}

\date{\today}

\maketitle

\begin{abstract}

We present new, short and elementary proofs of the famous section theorems that are used in Stochastic Calculus. Predictable section is proved directly while measurable section is a simple corollary. Then, optional (resp. accessible) section follows from an intuitive approximation argument based on the dichotomy of predictable and total inaccessible times.

\vspace{5mm}
\noindent{\bf Key words:} measurable section; predictable section; optional section; accessible section; measurable projection;
Souslin operation \vspace{5mm}
\end{abstract}

\section{Introduction}
The measurable projection and section along with the predictable and optional section theorems are the pillars of classical general theory of processes in stochastic calculus. In most textbooks their proofs are not included as they are deemed quite difficult, see for example \cite{cohen2015stochastic}[Chapter 7 p. 159], \cite{jacod2013limit}[Chapter I.2. p.19], 
or
\cite{protter2005stochastic}[Chapter I.2. p. 7], 
That is because the proofs require descriptive set theory, 
which is something a probabilist is not very used to deal with.

Generally the $\pi_{\Omega}\footnote{$\pi_{\Omega} : \Omega \times \mathbb{R}_+ \rightarrow \Omega$ with $\pi_{\Omega}(x,y) := x,\hspace{0.2cm} \pi_{\mathbb{R}_+}: \Omega \times \mathbb{R}_+ \rightarrow \mathbb{R}_+$ with $\pi_{\mathbb{R}_+}(x,y) := y.$}-$projection of sets belonging to the product $\sigma-$algebra $\mathcal{F}\otimes \mathcal{B}(\mathbb{R}_+)$ is not $\mathcal{F}-$measurable, hence a version of the capacitability theorem is almost impossible to avoid. 
The first proofs of predictable and optional section theorems in Dellacherie \cite{dellacherie2019capacites}, Dellacherie and Meyer \cite{dellacherie1978probabilities}[Theorem 84 p. 137 and Theorem 85 p. 138] built upon the measurable section theorem \cite{dellacherie1978probabilities}[Theorem 44 p. 64]. In order to prove the latter a sufficient introduction to Choquet’s capacitability theorem and analytics sets was needed, using among others auxiliary spaces, compact pavings and projections of appropriate classes. The same procedure was followed, more or less, by He, Wang, Yan \cite{he2019semimartingale}[Theorem 4.7 p.114 and Theorem 4.8 p. 115] and Medvegyev \cite{medvegyev2007stochastic}[Theorem 3.33 p. 195]. 
Bichteler \cite{bichteler2002stochastic}[Section A.5 p 432] using the same tools approached the predictable and optional section theorems directly. However his method for the optional section theorem \cite{bichteler2002stochastic}[A.5.19 p 440] is problematic, due to the lack of compactness in sets of the form $\pi_{\mathbb{R}_+}((\{\omega\}\times \mathbb{R}_+) \cap\llbracket S, T\llbracket)$. 
Bass in \cite{bass2010measurability}, to make the classical presentation simpler, used a version of capacitability which he called $t$-approximability, even so, his methods did not avoid the introduction of auxiliary Hausdorff spaces and so on. Finally let us mention that Dellacherie in \cite{SPS_1969__3__115_0} 
proved a version of the capacitability theorem applicable to monotone classes. A similar exposition can be found by Lowther \cite{Blog}. The main downside of this method is the lack of intuitiveness in the proof of this capacitability theorem. Furthermore, once again the proofs of predictable and optional section theorems rest on measurable section.

In the present work we prove the predictable section theorem directly. To do this, we show that every predictable set can be written as a union of sets from $\mathcal{E}^*_\delta$,\footnote{For every class of sets $\mathcal{A}$ we denote by $\mathcal{A}_\delta : = \left\{\bigcap_{n = 1}^{\infty}A_n : A_n \in \mathcal{A}\hspace{0.1cm} \forall n \in \mathbb{N}\right\}$.} see \Cref{2.3}, that although uncountable many, they satisfy a monotone condition which allows us to approximate the predictable set as much as we want by using one of them each time. For this analysis all we really need is the introduction of Souslin schemes and their most immediate properties, given in the almost trivial \Cref{lemma 2.2}.
Additionally, our approach allows without much trouble to avoid any mention of capacities and rely only to the immediate properties of the outer measure $\mathbb{P}^*$, see \Cref{2.5}. 
Then, measurable projection and section are an immediate corollary of predictable section, see \Cref{remark 3.3}. Our final insight is that optional section also follows directly from predictable section, as long as a suitable approximation result of optional sets from predictable is used, see \Cref{4.6}.

\section{Technical Prerequisites}

\begin{definition}
Let $E$ be an arbitrary nonempty set and $\emptyset \subset \mathcal{E} \subseteq 2^E$. A function $A_{\{n_1,...,n_k\}}: \bigcup_{k = 1}^{\infty}\mathbb{N}^k \longrightarrow \mathcal{E}\cup \{E\}$ is said to be a \emph{Souslin scheme} with values in $\mathcal{E}$.
The \emph{Souslin operation} given a Souslin scheme $A_{\{n_1,...,n_k\}}$ produces the set  $A := \bigcup_{n \in {\mathbb{N}}^{\mathbb{N}}}\bigcap_{k = 1}^{\infty}A_{n_1,...,n_k}$.
The collection of all these sets is denoted by $\mathcal{S}(\mathcal{E})$ and called the \emph{Souslin class} of $\mathcal{E}$.
Finally, a Souslin scheme $A_{\{n_1,...,n_k\}}$ will be called \emph{ vertically monotone} when for every $k \in \mathbb{N}$ and $(n_1,...,n_k)\in \mathbb{N}^k$ we have $A_{n_1,...,n_k,n_{k + 1}} \subseteq A_{n_1,...,n_k}$, and \emph{horizontally monotone} when for every $k \in \mathbb{N}$ and $(n_1,...,n_k), (m_1,...,m_k)\in \mathbb{N}^k$ such that $n_1\leq m_1, ..., n_k \leq m_k$ we have $A_{n_1,...,n_k} \subseteq A_{m_1,...,m_k}$. If it is both vertically and horizontally monotone then it will be simple called monotone.
\end{definition}

\begin{lemma}\label{lemma 2.2}
Let $E$ be an arbitrary nonempty set and $\emptyset \subset \mathcal{E} \subseteq 2^E$. The following are true.
\begin{itemize}
    \item [i.] $\mathcal{S}(\mathcal{E})$ is closed with respect to countable unions and intersections.
    
    \item [ii.] If $\mathcal{E}$ is closed with respect to finite unions and intersections and $A \in \mathcal{S}(\mathcal{E})$ produced by $A_{\{n_1,...,n_k\}}$, then we can assume without loss of generality that $A_{\{n_1,...,n_k\}}$ is monotone.
\end{itemize}
\end{lemma}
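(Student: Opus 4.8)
The plan is to prove the two parts of \Cref{lemma 2.2} separately, with part (i) being a standard diagonal-style re-indexing argument and part (ii) being the key technical normalisation that later proofs will rely on.

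\textbf{Part (i).} First I would handle countable unions. Given sets $A^{(j)} \in \mathcal{S}(\mathcal{E})$ for $j \in \mathbb{N}$, each produced by a Souslin scheme $A^{(j)}_{\{n_1,\dots,n_k\}}$, the idea is to build a single scheme that first reads off the index $j = n_1$ and then follows the $j$-th scheme on the remaining coordinates. Concretely, set $B_{\{n_1\}} := E$ and, for $k \geq 2$, $B_{\{n_1,n_2,\dots,n_k\}} := A^{(n_1)}_{\{n_2,\dots,n_k\}}$; one checks directly that $\bigcup_{n \in \mathbb{N}^{\mathbb{N}}} \bigcap_{k=1}^{\infty} B_{n_1,\dots,n_k} = \bigcup_{j} A^{(j)}$, since fixing $n_1 = j$ and letting the tail range over $\mathbb{N}^{\mathbb{N}}$ recovers exactly $A^{(j)}$, and the leading $E$ term contributes nothing to the intersection. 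For countable intersections, given $A^{(j)}$ as above, I would use the classical pairing trick: choose a bijection $\langle \cdot,\cdot\rangle : \mathbb{N}^2 \to \mathbb{N}$ (or interleave coordinates) so that a single sequence $n \in \mathbb{N}^{\mathbb{N}}$ encodes a whole family $(n^{(j)})_{j\in\mathbb{N}}$ of sequences, and define the new scheme on a finite string by taking the intersection, over the finitely many $j$ whose coordinates have already appeared, of the corresponding finite-string values of $A^{(j)}$, padding with $E$ where a scheme has not yet received enough coordinates. Unwinding the definitions shows the resulting Souslin set is $\bigcap_j A^{(j)}$. These are routine bookkeeping arguments; the only care needed is to ensure every finite-string value lands in $\mathcal{E} \cup \{E\}$, which is why the padding uses $E$.

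\textbf{Part (ii).} Here I would first achieve vertical monotonicity and then horizontal monotonicity, checking that the second step preserves the first. Starting from $A$ produced by $A_{\{n_1,\dots,n_k\}}$, define $\widetilde{A}_{\{n_1,\dots,n_k\}} := \bigcap_{i=1}^{k} A_{\{n_1,\dots,n_i\}}$. Since $\mathcal{E}$ is closed under finite intersections (and $E$ acts as a unit, so intersecting with it is harmless), each $\widetilde{A}_{\{n_1,\dots,n_k\}} \in \mathcal{E} \cup \{E\}$; it is vertically monotone by construction; and for each fixed $n \in \mathbb{N}^{\mathbb{N}}$ we have $\bigcap_{k} \widetilde{A}_{n_1,\dots,n_k} = \bigcap_k A_{n_1,\dots,n_k}$, so the Souslin set is unchanged. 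Next, from a vertically monotone scheme $\widetilde{A}$ define $\widehat{A}_{\{n_1,\dots,n_k\}} := \bigcup \{ \widetilde{A}_{\{m_1,\dots,m_k\}} : m_i \leq n_i \text{ for all } i \leq k \}$, a finite union, hence in $\mathcal{E}\cup\{E\}$ using closure under finite unions. Horizontal monotonicity is immediate from the definition. Vertical monotonicity is preserved because $\widetilde A$ is vertically monotone: appending a coordinate $n_{k+1}$ only adds terms $\widetilde A_{\{m_1,\dots,m_{k+1}\}} \subseteq \widetilde A_{\{m_1,\dots,m_k\}}$, each of which is already contained in the union defining $\widehat A_{\{n_1,\dots,n_k\}}$. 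Finally I must verify the Souslin set is unchanged: the inclusion $\bigcup_n \bigcap_k \widetilde A_{n_1,\dots,n_k} \subseteq \bigcup_n \bigcap_k \widehat A_{n_1,\dots,n_k}$ is clear since $\widetilde A \subseteq \widehat A$ termwise; for the reverse, given $n$ with $x \in \bigcap_k \widehat A_{n_1,\dots,n_k}$, for each $k$ pick $m^{(k)} \leq (n_1,\dots,n_k)$ (coordinatewise) with $x \in \widetilde A_{m^{(k)}_1,\dots,m^{(k)}_k}$; since each coordinate is bounded by the corresponding $n_i$, a König's-lemma / diagonal argument extracts a single sequence $m \in \mathbb{N}^{\mathbb{N}}$ with $m_i \leq n_i$ such that $x \in \widetilde A_{m_1,\dots,m_k}$ for infinitely many $k$, and then vertical monotonicity of $\widetilde A$ upgrades this to all $k$, giving $x \in \bigcap_k \widetilde A_{m_1,\dots,m_k}$.

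The main obstacle is the last verification in part (ii): showing that passing to the horizontally-monotone hull does not enlarge the Souslin set. The subtlety is that the finitely many witnessing indices $m^{(k)}$ can a priori vary wildly with $k$, and one needs a compactness argument — each coordinate lives in the finite set $\{1,\dots,n_i\}$, so by a pigeonhole/diagonalisation (equivalently König's lemma on the finitely-branching tree of admissible prefixes) one stabilises to a single admissible infinite branch. Everything else is careful but mechanical re-indexing, and I would present part (i) compactly since those constructions are entirely standard.
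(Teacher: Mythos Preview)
Your approach is essentially the same as the paper's. In part (ii) you proceed in two steps (first vertical, then horizontal monotonisation) whereas the paper does both at once by setting $D_{h_1,\dots,h_l} := \bigcup_{m_i \leq h_i} \bigcap_{k=1}^{l} A_{m_1,\dots,m_k}$; composing your two steps yields exactly this, and your K\"onig's-lemma extraction is the paper's pigeonhole argument verbatim.

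There is, however, one genuine slip in part (i) for intersections. You propose to set the value of the new scheme at a finite string to be the \emph{intersection}, over all $j$ whose coordinates have already appeared, of the corresponding values $A^{(j)}_{\dots}$. But in part (i) the class $\mathcal{E}$ is \emph{not} assumed closed under finite intersections, so this value need not lie in $\mathcal{E} \cup \{E\}$ and your construction does not produce a Souslin scheme in the sense of the paper's definition. The paper avoids this by assigning at each depth $l$ a \emph{single} value from one of the schemes: with a coordinatewise-increasing bijection $\vartheta : \mathbb{N}^2 \to \mathbb{N}$, it sets $D_{h_1,\dots,h_l} := A^{m}_{h_{\vartheta(1,m)},\dots,h_{\vartheta(k,m)}}$ where $(k,m) = \vartheta^{-1}(l)$, so that as $l$ ranges over $\mathbb{N}$ every scheme $A^{(m)}$ is visited at every depth. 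This keeps all values in $\mathcal{E} \cup \{E\}$ without any closure hypothesis. Your construction is easily repaired along these lines; the rest of your argument is fine.
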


\begin{proof}
For \textit{i.}, let $\{A^m\}_{m \in \mathbb{N}} \subseteq \mathcal{S}(\mathcal{E})$ and $\vartheta : \mathbb{N} \times \mathbb{N} \longrightarrow \mathbb{N}$ an increasing per coordinate bijection.\footnote{For example we can pick as $\vartheta(k,m):= \begin{cases}
    (m - 1)^2 + m - 1 + k \hspace{0.1cm}&\text{if}\hspace{0.1cm} k \leq m\\
    (k - 1)^2 + m \hspace{0.1cm}&\text{if}\hspace{0.1cm} m < k.
\end{cases}$.}
Then we can decompose $\mathbb{N}$ into a sequence of disjoint sets as $\mathbb{N} = \bigcup_{m = 1}^{\infty}\left\{\vartheta(k,m): k \in \mathbb{N}\right\}$. Hence,
$\bigcup_{m = 1}^{\infty}A^m = \bigcup_{m = 1}^{\infty}\bigcup_{n \in {\mathbb{N}}^{\mathbb{N}}}\bigcap_{k = 1}^{\infty}A_{n_1,...,n_k}^m = 
 \bigcup_{h \in {\mathbb{N}}^{\mathbb{N}}}\bigcap_{l = 1}^{\infty} D_{h_1,...,h_l},$ where 
 $D_{h_{1},...,h_{l}} := A^{\pi_2\circ\vartheta^{-1}(h_1)}_{\pi_1\circ\vartheta^{-1}(h_1),h_2,...,h_{l}}$\footnote{$\pi_{1} : \mathbb{N} \times \mathbb{N} \rightarrow \mathbb{N}$ with $\pi_{1}(k,m) := k,\hspace{0.2cm} \pi_{2}: \mathbb{N} \times \mathbb{N} \rightarrow \mathbb{N}$ with $\pi_{2}(k,m) := m.$}
, and similarly
$
\bigcap_{m = 1}^{\infty}A^m = \bigcap_{m = 1}^{\infty}\bigcup_{n \in {\mathbb{N}}^{\mathbb{N}}}\bigcap_{k = 1}^{\infty}A_{n_1,...,n_k}^m = \bigcup_{n \in {\mathbb{N}}^{\mathbb{N}}}\bigcap_{m = 1}^{\infty}\bigcap_{k = 1}^{\infty}A_{n_{\vartheta(1,m)},...,n_{\vartheta(k,m)}}^{m} =
 \bigcup_{h \in {\mathbb{N}}^{\mathbb{N}}}\bigcap_{l = 1}^{\infty} D_{h_1,...,h_l},
$
where 
$D_{h_1,...,h_l} := A_{h_{\vartheta(1,\pi_2\circ\vartheta^{-1}(l))},...,h_{\vartheta(\pi_1\circ\vartheta^{-1}(l),\pi_2\circ\vartheta^{-1}(l))}}^{\pi_2\circ\vartheta^{-1}(l)}$.

For \textit{ii.}, let $A = \bigcup_{n \in {\mathbb{N}}^{\mathbb{N}}}\bigcap_{k = 1}^{\infty}A_{n_1,...,n_k}$, 
we define
$D_{h_1,...,h_l} := \bigcup_{\{n \in \mathbb{N}^{l} : n_1\leq h_1, ..., n_l\leq h_l\}} \bigcap_{k =1}^l A_{n_1,...,n_k}$. Obviously $D_{\{h_1,...,h_l\}}$ is monotone and $\bigcup_{n \in {\mathbb{N}}^{\mathbb{N}}}\bigcap_{k = 1}^{\infty}A_{n_1,...,n_k} \subseteq \bigcup_{h \in {\mathbb{N}}^{\mathbb{N}}}\bigcap_{l = 1}^{\infty}D_{h_1,...,h_l}$. For the reverse relation, let $h^* \in \mathbb{N}^{\mathbb{N}}$ and $x \in \bigcap_{l = 1}^{\infty}D_{h^*_1,...,h^*_l}$, then for every $l \in \mathbb{N}$ there exists a $(n_1^l,...,n^l_l) \in \mathbb{N}^l$ such that $n^l_1 \leq h_1^* ,..., n^l_l \leq h_l^*$ and $x \in \bigcap_{k=1}^{l}A_{n^l_1,...,n^l_k}$. Due to the finiteness of $\{1,..., h_1^*\}$ there exists an $n^*_1 \in \{1,..., h_1^*\}$ such that for infinite $l \in \mathbb{N}$ we have $n^l_1 = n^*_1$. In the next step, again by the finiteness of $\{1,..., h_2^*\}$, there exists an $n_2^* \in \{1,...,h_2^*\}$ such that for infinite $l \in \mathbb{N}$ we have $n^l_1 = n^*_1$ and $n^l_2 = n^*_2$. Continuing this way we find a sequence $n^* \in \mathbb{N}^{\mathbb{N}}$ such that for every $k \in \mathbb{N}$ there are infinite $l \in \mathbb{N}$ where $n^l_1 = n^*_1, \dots, n^l_k = n^*_k$, hence we have $x \in \bigcap_{k = 1}^{\infty}A_{n_1^*,...,n_k^*}$.
\end{proof}

 Because the images of projections generally are not measurable we need to make a trivial extension of $\mathbb{P}$ to the whole power set. 
 
\begin{lemma}\label{2.5}
Let $(\Omega,\mathcal{F},\mathbb{P})$ be a probability space. Then, the set function $\mathbb{P}^*: 2^\Omega \rightarrow \mathbb{R}_+$ where $\mathbb{P}^*(A):= \inf\{\mathbb{P}(E): A \subseteq E$ and $E \in \mathcal{F}\}$, is an outer measure extension of $\mathbb{P}$ such that for all $A \in  2^\Omega$ there exists $E_A \in \mathcal{F}$ with $A \subseteq E_A$ and $\mathbb{P}^*(A) = \mathbb{P}(E_A)$. Finally, $\mathbb{P}^*$ is continuous on increasing sequences. 
\end{lemma}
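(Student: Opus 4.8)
The plan is to verify the four assertions in sequence, each of them essentially routine once the right $\mathcal{F}$-valued hull of a set is chosen. First I would check that $\mathbb{P}^*$ extends $\mathbb{P}$: for $A \in \mathcal{F}$ the admissible choice $E = A$ gives $\mathbb{P}^*(A) \le \mathbb{P}(A)$, while monotonicity of $\mathbb{P}$ forces $\mathbb{P}(E) \ge \mathbb{P}(A)$ for every measurable $E \supseteq A$, so $\mathbb{P}^*(A) = \mathbb{P}(A)$. Monotonicity of $\mathbb{P}^*$ is immediate, since enlarging $A$ only shrinks the family of admissible $E$, and $\mathbb{P}^*(\emptyset) = 0$ is trivial.

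Next I would establish countable subadditivity. Given $A = \bigcup_n A_n$ and $\eps > 0$, pick $E_n \in \mathcal{F}$ with $A_n \subseteq E_n$ and $\mathbb{P}(E_n) \le \mathbb{P}^*(A_n) + \eps 2^{-n}$; then $\bigcup_n E_n \in \mathcal{F}$ contains $A$, whence $\mathbb{P}^*(A) \le \mathbb{P}\big(\bigcup_n E_n\big) \le \sum_n \mathbb{P}(E_n) \le \sum_n \mathbb{P}^*(A_n) + \eps$, and letting $\eps \downarrow 0$ concludes. Together with $\mathbb{P}^*(\emptyset) = 0$ this makes $\mathbb{P}^*$ an outer measure. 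For the measurable hull attaining the infimum, I would choose $E_n \in \mathcal{F}$ with $A \subseteq E_n$ and $\mathbb{P}(E_n) \le \mathbb{P}^*(A) + 1/n$ and set $E_A := \bigcap_n E_n \in \mathcal{F}$; then $A \subseteq E_A$ and $\mathbb{P}^*(A) \le \mathbb{P}(E_A) \le \inf_n \mathbb{P}(E_n) = \mathbb{P}^*(A)$, so equality holds.

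Finally, continuity along an increasing sequence $A_n \uparrow A$. Monotonicity already gives $\lim_n \mathbb{P}^*(A_n) \le \mathbb{P}^*(A)$, so only the reverse inequality needs work, and this is the one place a small idea is required: one must replace the approximating sets by an \emph{increasing} sequence without inflating the measure. Using the hull property just proved, choose $E_n \in \mathcal{F}$ with $A_n \subseteq E_n$ and $\mathbb{P}(E_n) = \mathbb{P}^*(A_n)$, and put $F_n := \bigcap_{m \ge n} E_m \in \mathcal{F}$. Since $A_n \subseteq A_m \subseteq E_m$ for all $m \ge n$, we get $A_n \subseteq F_n \subseteq E_n$, hence $\mathbb{P}(F_n) = \mathbb{P}^*(A_n)$; moreover $(F_n)$ is increasing and $\bigcup_n F_n \supseteq A$ with $\bigcup_n F_n \in \mathcal{F}$. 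Continuity from below of $\mathbb{P}$ then yields $\mathbb{P}^*(A) \le \mathbb{P}\big(\bigcup_n F_n\big) = \lim_n \mathbb{P}(F_n) = \lim_n \mathbb{P}^*(A_n)$, which completes the argument. I do not anticipate a genuine obstacle; the only subtlety is remembering to intersect the $E_m$ (rather than use $E_n$ directly) so that monotonicity of the $F_n$ is preserved while their measures stay pinned at $\mathbb{P}^*(A_n)$.
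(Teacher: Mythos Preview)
Your proof is correct and essentially identical to the paper's: the crucial step for upward continuity is exactly the same trick of passing from the hulls $E_n$ to the increasing sequence $F_n:=\bigcap_{m\ge n}E_m$ (the paper writes $B_n$). The only cosmetic differences are that you spell out the extension and hull properties explicitly and prove subadditivity via the $\eps 2^{-n}$ argument before constructing the hull, whereas the paper declares both ``obvious'' and then derives subadditivity from the hull.
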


\begin{proof}
The fact that is an extension is obvious, as is also obvious that for all $A \in  2^\Omega$ exists $E_A \in \mathcal{F}$ with $A \subseteq E_A$ and $\mathbb{P}^*(A) = \mathbb{P}(E_A)$. From this follows immediately the countable subadditivity property. Now, let $\{A_n\}_{n \in \mathbb{N}} \subseteq \mathcal{P}(\Omega)$ be an increasing sequence, then there exists a corresponding sequence $\{E_{A_n}\}_{n \in \mathbb{N}} \subseteq \mathcal{F}$ such that for all $n \geq 1$ it is true that $A_n \subseteq E_{A_n}$ and $\mathbb{P}(E_{A_n}) = \mathbb{P}^*(A_n)$. We define the sequence $\{B_n\}_{n \in \mathbb{N}}$ where $B_n:= \bigcap_{m = n}^{\infty}E_{A_n}$, this is increasing with $\mathbb{P}(B_n) = \mathbb{P}^*(A_n)$ and $\bigcup_{n = 1}^{\infty}A_n \subseteq \bigcup_{n = 1}^{\infty}B_n$. Hence, we have
$
 \mathbb{P}(\bigcup_{n = 1}^{\infty}B_n) = \lim_{n \rightarrow \infty}\mathbb{P}(B_n) = \lim_{n \rightarrow \infty}\mathbb{P}^*(A_n) \leq \mathbb{P}^*(\bigcup_{n = 1}^{\infty}A_n) \leq \mathbb{P}(\bigcup_{n = 1}^{\infty}B_n).$
\end{proof}

\section{
Section Theorems}\label{4}

Let $(\Omega, \mathcal{F})$ be a measurable space and $\mathbb{P}$ a probability measure on $\mathcal{F}$. We remind that given a filtration $\{\mathcal{F}_t\}_{t \in \mathbb{R}_+}$ (without the usual conditions) on $\Omega \times \mathbb{R}_+$ such that $\bigvee_{t \in \mathbb{R}_+}\mathcal{F}_t \subseteq \mathcal{F}$, and assuming that one is familiar with the \emph{optional} (stopping) times denoted by $\mathscr{O}$ and their basic properties, we say that an optional time $\rho$ is \emph{predictable} if and only if there exists a non decreasing sequence of optional times $\{\rho_n\}_{n \in \mathbb{N}}$ such that $\rho_n < \infty, \rho_n \leq \rho$ and $\rho_n < \rho$ on $\{0 < \rho\}$, for all $n\in \mathbb{N}$, with the property $\rho_n \nearrow \rho$. By abusing notation, as we did with the optional times and the optional $\sigma-$algebra, we denote the set of predictable times as $\mathscr{P}$. It will be clear from the context when $\mathscr{P}$ symbolizes the predictable times and when the predictable $\sigma-$algebra of $\Omega \times \mathbb{R}_+$.

\begin{definition}
For every $S \in \Omega \times \mathbb{R}_+$ the \emph{debut} of $S$ is a function $\Omega \rightarrow \mathbb{R}_+ \cup \{\infty\}$ which is denoted by $\mathscr{D}[S]$ and is defined as 
\[
\mathscr{D}[S](\omega) := \inf\{s \in \mathbb{R} : (\omega,s) \in S\},
\]
with the convention $\inf \emptyset = \infty$. In addition, for every function $\tau : \Omega \rightarrow \mathbb{R}_+ \cup \{\infty\}$ we denote by $\llbracket\tau\rrbracket$ its \emph{graph} $\{(\omega,\tau(\omega)) : \omega \in \Omega \hspace{0.1cm} \text{and} \hspace{0.1cm} \tau(\omega) < \infty\}$, and for every $A \in \mathcal{F}$ we denote by $\tau_A$ the function $\Omega \rightarrow \mathbb{R}_+\cup\{\infty\}$ such that $\tau_A := \tau \mathds{1}_A + \infty \mathds{1}_{A^c}$.
\end{definition}
Observe that above the predictable times were defined without any reference to a specific probability measure by demanding the convergence to hold for all $\omega$.
The following properties of predictable times are basic and their proofs are trivial so they are omitted.
\begin{itemize}
\item For all $\tau \in \mathscr{O}$ and $t \in \mathbb{R}_+ \setminus \{0\}$ we have that $\tau + t \in \mathscr{P}$.
\item For all $A \in \mathcal{F}_0$ we have $0_A \in \mathscr{P}$.
\item For all $\rho_1, \rho_2 \in \mathscr{P}$ we have that $\rho_1 \wedge \rho_2 \in \mathscr{P}$.
\item For every sequence $\{\rho_n\}_{n \in \mathbb{N}} \subseteq \mathscr{P}$ we have that $\sup_{n \in \mathbb{N}}\{\rho_n\} \in \mathscr{P}$.
\item For all $\tau \in \mathscr{O}$ and $\rho \in \mathscr{P}$ we have that $\rho_{\{\rho \leq \tau\}} \in \mathscr{P}$.
\end{itemize}

At this point we note that when we write $\llbracket\rho,\tau\rrbracket$ for $\rho, \tau \in \mathscr{O}$ we mean $\llbracket\rho,\infty\llbracket \hspace{0.1cm} \cap \hspace{0.1cm} \llbracket0,\tau\rrbracket$ but we do not demand $\rho \leq \tau$. Also $\llbracket\infty,\tau\rrbracket = \emptyset$, for every $\tau \in \mathscr{O}$.

\begin{lemma}\label{2.3}
Let $\mathcal{E}^* := \{\bigcup_{k = 1}^{n}\llbracket\rho_k,\tau_k\rrbracket:$ $n\in \mathbb{N},\rho_k \in \mathscr{P}$ and $\tau_k \in \mathscr{O} \hspace{0.2cm} \text{with} \hspace{0.2cm} \tau_k < \infty\} \cup \{\emptyset\}$. Then for every predictable set $P \in \mathscr{P}$ there exists a monotone Souslin scheme $P_{\{n_1,...,n_k\}}$ with values in $\mathcal{E}^*$ which produces it. 
\end{lemma}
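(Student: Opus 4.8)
The goal is to show every predictable set $P$ is produced by a monotone Souslin scheme with values in $\mathcal{E}^*$. The plan is first to identify a convenient generating class for the predictable $\sigma$-algebra $\mathscr{P}$ inside $\mathcal{S}(\mathcal{E}^*)$, and then invoke the closure properties of the Souslin class. Recall that $\mathscr{P}$ is generated by stochastic intervals of the form $\llbracket 0, \tau \rrbracket$ for $\tau \in \mathscr{O}$, or equivalently by sets $\llbracket \rho, \tau \rrbracket$ with $\rho \in \mathscr{P}$, $\tau \in \mathscr{O}$, together with sets $A \times \{0\}$ for $A \in \mathcal{F}_0$; in fact the algebra generated by $\{\llbracket \rho, \infty \llbracket : \rho \in \mathscr{P}\} \cup \{A \times \{0\}\}$ already generates $\mathscr{P}$. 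I would start by checking that $\mathcal{E}^*$ is closed under finite unions (immediate from its definition) and under finite intersections (since $\llbracket \rho_1, \tau_1 \rrbracket \cap \llbracket \rho_2, \tau_2 \rrbracket = \llbracket \rho_1 \vee \rho_2, \tau_1 \wedge \tau_2 \rrbracket$, using that $\mathscr{P}$ is stable under $\vee$ restricted to $\mathcal{F}$-sets — here one needs $\rho_1 \vee \rho_2 = (\rho_1)_{\{\rho_1 \geq \rho_2\}} \wedge \dots$, or more carefully that $\rho_1 \vee \rho_2$ need not be predictable, so intersections should be handled by writing things as finite unions of intervals of the required type). This is the first delicate point.

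Next I would show $\mathscr{P} \subseteq \mathcal{S}(\mathcal{E}^*)$. Since, by Lemma 2.2(i), $\mathcal{S}(\mathcal{E}^*)$ is closed under countable unions and intersections, it suffices to exhibit a generating collection for $\mathscr{P}$ that lies in $\mathcal{S}(\mathcal{E}^*)$ and is itself stable in a way that lets us reach the whole $\sigma$-algebra. The obstacle is complementation: Souslin classes are not closed under complements in general. So the right strategy is to show that both $\llbracket \rho, \infty \llbracket$ (equivalently $\llbracket \rho, \tau \rrbracket$ on letting $\tau \to \infty$ through a sequence, i.e. $\llbracket \rho, \infty \llbracket = \bigcup_n \llbracket \rho, n \rrbracket \in \mathcal{S}(\mathcal{E}^*)$) \emph{and its complement} $\llbracket 0, \rho \llbracket$ lie in $\mathcal{S}(\mathcal{E}^*)$. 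For the complement: $\llbracket 0, \rho \llbracket = \bigcup_{q \in \mathbb{Q}_+} \llbracket q_{\{q < \rho\}} , \ \rho \rrbracket^{c?}$ — rather, write $\llbracket 0, \rho \llbracket = \bigcup_{n} \llbracket 0, \rho_n \rrbracket$ where $\rho_n \nearrow \rho$ is the announcing sequence; each $\rho_n$ is optional and $\rho_n < \infty$, and $\llbracket 0, \rho_n \rrbracket = \llbracket 0_\Omega, \rho_n \rrbracket \in \mathcal{E}^*$ (with $\rho = 0_\Omega \in \mathscr{P}$). Hence $\llbracket 0, \rho \llbracket \in \mathcal{S}(\mathcal{E}^*)$. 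Combined with the fact that finite unions of sets $A \times \{0\}$, $A \in \mathcal{F}_0$, also lie in $\mathcal{E}^*$ (as $0_A \in \mathscr{P}$), and that sets of the form $\llbracket \rho, \infty \llbracket \in \mathcal{S}(\mathcal{E}^*)$, the family $\mathcal{D} := \{ B \in \mathscr{P} : B \in \mathcal{S}(\mathcal{E}^*) \text{ and } B^c \in \mathcal{S}(\mathcal{E}^*)\}$ contains a generating set. I would then verify $\mathcal{D}$ is a $\sigma$-algebra: it is closed under complements by symmetry of its definition, and closed under countable unions and intersections by Lemma 2.2(i) together with De Morgan. Therefore $\mathcal{D} = \mathscr{P}$ and in particular $P \in \mathcal{S}(\mathcal{E}^*)$.

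Finally, having $P \in \mathcal{S}(\mathcal{E}^*)$, apply Lemma 2.2(ii): since $\mathcal{E}^*$ is closed under finite unions and finite intersections, the Souslin scheme producing $P$ can be taken monotone, which is exactly the conclusion. The main obstacle I anticipate is the bookkeeping around intersections and complements in $\mathcal{E}^*$: one must be careful that $\llbracket \rho_1, \tau_1 \rrbracket \cap \llbracket \rho_2, \tau_2 \rrbracket$ really reduces to a set of the prescribed form, using the listed stability properties of $\mathscr{P}$ (stability under $\wedge$, under restriction $\rho_{\{\rho \le \tau\}}$, and $0_A \in \mathscr{P}$ for $A \in \mathcal{F}_0$), and similarly that the complement of a single interval $\llbracket \rho, \tau \rrbracket$ decomposes as $\llbracket 0, \rho \llbracket \ \cup \ \rrbracket \tau, \infty \llbracket$, each piece of which must be exhibited in $\mathcal{S}(\mathcal{E}^*)$ — the second piece $\rrbracket \tau, \infty \llbracket = \bigcup_n \llbracket \tau + 1/n, \infty \llbracket$ uses that $\tau + 1/n \in \mathscr{P}$ for $\tau \in \mathscr{O}$. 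Once these elementary identities are in place, the rest is the soft Souslin-class argument.
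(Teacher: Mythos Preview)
Your plan is essentially the paper's own proof: define $\mathcal{D}=\{D\in\mathcal{S}(\mathcal{E}^*):D^c\in\mathcal{S}(\mathcal{E}^*)\}$, check it is a $\sigma$-algebra containing $\mathcal{E}^*$ (hence $\sigma(\mathcal{E}^*)=\mathscr{P}$), and then invoke Lemma~2.2(ii). Two small points on the obstacles you flag. First, closure of $\mathcal{E}^*$ under finite intersections is in fact immediate: $\llbracket\rho_1,\tau_1\rrbracket\cap\llbracket\rho_2,\tau_2\rrbracket=\llbracket\rho_1\vee\rho_2,\tau_1\wedge\tau_2\rrbracket$, and $\rho_1\vee\rho_2\in\mathscr{P}$ by the listed stability of $\mathscr{P}$ under suprema, so your worry there is unnecessary. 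Second, your identity $\llbracket 0,\rho\llbracket=\bigcup_n\llbracket 0,\rho_n\rrbracket$ fails on $\{\rho=0\}$, where the right side contains $(\omega,0)$ but the left side is empty; the paper handles this by writing $\llbracket 0,\rho\llbracket=\bigcup_n\llbracket 1/n,\rho_n\rrbracket\cup\llbracket 0_{\{\rho>0\}},0_{\{\rho>0\}}\wedge 1\rrbracket$, which stays inside $\mathcal{E}^*$ and is correct everywhere.
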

\begin{proof}
We are going to show that $\sigma(\mathcal{E}^*)= \{D \in \mathcal{S}(\mathcal{E}^*): D^c \in \mathcal{S}(\mathcal{E}^*)\}$ and thus $\sigma(\mathcal{E}^*) \subseteq \mathcal{S}(\mathcal{E}^*)$, then because $\mathcal{E}^*$ is closed with respect to finite unions and intersections from \textit{ii.} of \Cref{lemma 2.2} we will be done. Observe that $\mathscr{P} := \{\llbracket\rho,\infty\llbracket : \rho \in \mathscr{P}\} = \sigma(\mathcal{E}^*)$. From \textit{i.} of \Cref{lemma 2.2} it is enough to show that $\mathcal{E}^* \subseteq \{D \in \mathcal{S}(\mathcal{E}^*): D^c \in \mathcal{S}(\mathcal{E}^*)\}$. So, let $\rho \in \mathscr{P}$ and $\tau \in \mathscr{O}$, then $(\llbracket \rho,\tau\rrbracket)^c = \rrbracket\tau,\infty\llbracket \cup \llbracket0,\rho\llbracket$, but $\rrbracket\tau,\infty\llbracket = \bigcup_{n = 1}^{\infty} \llbracket\tau + \frac{1}{n}, n\rrbracket$ and $\llbracket0,\rho\llbracket = \bigcup_{n = 1}^{\infty} \llbracket\frac{1}{n}, \rho_n\rrbracket \cup \llbracket0_{\{\rho > 0\}},0_{\{\rho > 0\}}\wedge 1\rrbracket$, where $\{\rho_n\}_{n \in \mathbb{N}} \subseteq \mathscr{O}$ such that $\rho_n \nearrow \rho$ as in the definition of a predictable time, hence again by \textit{i.} of \Cref{lemma 2.2} the proof is complete.
\end{proof}

\begin{theorem}[Predictable Section]\label{pr sec the}
For every predictable set $P$ in $\mathscr{P}$ and $\epsilon > 0$ there exists a predictable time $\rho^{P,\epsilon} $ such that $\llbracket\rho^{P,\epsilon}\rrbracket \subseteq P$ and $\mathbb{P}^*\left(\pi_{\Omega}(P)\right) - \mathbb{P}\left(\{\rho^{P,\epsilon} < \infty\}\right) \leq \epsilon$.
\end{theorem}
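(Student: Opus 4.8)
The plan is to run a capacitability-style approximation directly at the level of the projection $\pi_\Omega$, using that the $\omega$-sections of the members of $\mathcal{E}^*$ are \emph{compact}. By \Cref{2.3} write $P = \bigcup_{n \in \mathbb{N}^{\mathbb{N}}} \bigcap_{k=1}^{\infty} P_{n_1,\dots,n_k}$ for a monotone Souslin scheme $P_{\{n_1,\dots,n_k\}}$ valued in $\mathcal{E}^*$ (if a value along the relevant branch should equal $\Omega\times\mathbb{R}_+$ one is either in the trivial situation $P=\Omega\times\mathbb{R}_+$, settled by $\rho^{P,\epsilon}:=0_\Omega$, or one discards an initial segment of the branch). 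For $m\in\mathbb{N}^{\mathbb{N}}$ and $j\ge 0$ put $R_j(m):=\bigcup\{\bigcap_{k=1}^\infty P_{n_1,\dots,n_k}: n\in\mathbb{N}^{\mathbb{N}},\ n_i\le m_i\ \text{for}\ i\le j\}$, so that $R_0(m)=P$, the sets $R_j(m)$ decrease in $j$, and $R_j(m)$ depends only on $m_1,\dots,m_j$. Two elementary observations drive the selection: $(\mathrm a)$ for fixed $m_1,\dots,m_j$ the sets $R_{j+1}$, as their $(j{+}1)$st coordinate runs to infinity, increase to $R_j(m)$, since every $n\in\mathbb{N}^{\mathbb{N}}$ eventually satisfies $n_{j+1}\le\ell$; and $(\mathrm b)$ $R_j(m)\subseteq P_{m_1,\dots,m_j}$, by vertical then horizontal monotonicity of the scheme.

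Since $\pi_\Omega$ commutes with unions and $\mathbb{P}^*$ is continuous along increasing sequences (\Cref{2.5}), observation $(\mathrm a)$ lets one choose $m_1,m_2,\dots$ inductively so that $\mathbb{P}^*\big(\pi_\Omega(R_j(m))\big)\ge\mathbb{P}^*\big(\pi_\Omega(P)\big)-\epsilon(1-2^{-j})$ for every $j$; combined with $(\mathrm b)$ this gives $\mathbb{P}^*\big(\pi_\Omega(P_{m_1,\dots,m_j})\big)\ge\mathbb{P}^*\big(\pi_\Omega(P)\big)-\epsilon$ for all $j$. Now fix this $m$, set $Q_j:=P_{m_1,\dots,m_j}\in\mathcal{E}^*$ and $Q:=\bigcap_{j=1}^\infty Q_j\in\mathcal{E}^*_\delta$; taking $n=m$ in the representation of $P$ shows $Q\subseteq P$.

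Writing $Q_j=\bigcup_{l=1}^{r_j}\llbracket\rho^j_l,\tau^j_l\rrbracket$ with $\rho^j_l\in\mathscr{P}$, $\tau^j_l\in\mathscr{O}$, $\tau^j_l<\infty$, each $\omega$-section $Q_j(\omega)$ is a finite union of closed bounded intervals, hence a compact subset of $\mathbb{R}_+$, and the sequence $\big(Q_j(\omega)\big)_j$ is decreasing. Consequently $Q(\omega)=\bigcap_j Q_j(\omega)$ is empty if and only if some $Q_j(\omega)$ is empty, so $\pi_\Omega(Q)=\bigcap_j\pi_\Omega(Q_j)$. As $\pi_\Omega\big(\llbracket\rho,\tau\rrbracket\big)=\{\rho\le\tau\}\in\mathcal{F}$, each $\pi_\Omega(Q_j)$ lies in $\mathcal{F}$; hence $\pi_\Omega(Q)\in\mathcal{F}$, and since $\mathbb{P}$ is continuous along decreasing sequences, $\mathbb{P}\big(\pi_\Omega(Q)\big)=\lim_j\mathbb{P}\big(\pi_\Omega(Q_j)\big)=\lim_j\mathbb{P}^*\big(\pi_\Omega(Q_j)\big)\ge\mathbb{P}^*\big(\pi_\Omega(P)\big)-\epsilon$.

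Finally take $\rho^{P,\epsilon}:=\mathscr{D}[Q]$. Compactness gives $\mathscr{D}[Q_j]=\min_{1\le l\le r_j}(\rho^j_l)_{\{\rho^j_l\le\tau^j_l\}}$, which is predictable by the listed stability properties of $\mathscr{P}$ (the times $\rho_{\{\rho\le\tau\}}$ and finite minima); and since on $\pi_\Omega(Q)$ the $Q_j(\omega)$ are decreasing nonempty compacts, their minima increase to $\min Q(\omega)=\mathscr{D}[Q](\omega)$, while off $\pi_\Omega(Q)$ both sides equal $\infty$. Thus $\mathscr{D}[Q]=\sup_j\mathscr{D}[Q_j]\in\mathscr{P}$. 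Because $\mathscr{D}[Q](\omega)\in Q(\omega)$ whenever finite, $\llbracket\rho^{P,\epsilon}\rrbracket\subseteq Q\subseteq P$, and since $\{\rho^{P,\epsilon}<\infty\}=\pi_\Omega(Q)$ the bound $\mathbb{P}^*(\pi_\Omega(P))-\mathbb{P}(\{\rho^{P,\epsilon}<\infty\})\le\epsilon$ follows. I expect two points to carry the weight: the branch selection — arranging that the $2^{-j}$-budget together with monotonicity of the scheme transports the lower bound from $R_j(m)$ onto the concrete set $P_{m_1,\dots,m_j}$ — and the observation that compactness of the $\mathcal{E}^*$-sections is precisely what lets $\pi_\Omega$ pass through the decreasing intersection $\bigcap_j Q_j$, which is the role played by compact-paving capacities in the classical arguments.
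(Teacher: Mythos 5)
Your proposal is correct and follows essentially the same route as the paper: the same monotone Souslin scheme from \Cref{2.3}, the same sets $R_j(m)$ (the paper's $M_{m_1,\dots,m_k}$) with the increasing-limit selection via \Cref{2.5}, and the same compact-sections argument showing that the debut of the resulting $\mathcal{E}^*_\delta$-set is a predictable time whose graph sections $P$. The only differences (the explicit $2^{-j}$ budget and proving the debut facts for the specific $Q$ rather than for general elements of $\mathcal{E}^*_\delta$ first) are cosmetic.
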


\begin{proof}
Let $P_{\{n_1,...,n_k\}}$ be a monotone Souslin scheme with values in $\mathcal{E}^*$
such that 
$P =  \bigcup_{n \in {\mathbb{N}}^{\mathbb{N}}}\bigcap_{k = 1}^{\infty}P_{n_1,...,n_k}$, this is guaranteed by \Cref{2.3}. 
So, $P$ can be written as an uncountable union of sets from $\mathcal{E}^*_\delta$ indexed by $\mathbb{N}^{\mathbb{N}}$.

Now, for every $D \in \mathcal{E}^*_\delta$, we will show that $\mathscr{D}[D] \in \mathscr{P}$ and $\llbracket \mathscr{D}[D]\rrbracket \subseteq  D$, which additionally means that $\pi_\Omega(D) = \{\mathscr{D}[D]<\infty\} \in \mathcal{F}$. First for $D \in \mathcal{E}^*$, we have $\mathscr{D}[D] = \min_{1 \leq k \leq m_D}\{{\rho^D_k}_{\{\rho^D_k \leq \tau^D_k\}}\} \in \mathscr{P}$ where $D = \bigcup_{k = 1}^{m_D}\llbracket\rho^D_k,\tau^D_k\rrbracket$ as in the definition, and because for every $\omega \in \Omega$ the $\pi_{\mathbb{R}_+}\left(\left(\{\omega\} \times \mathbb{R}_+\right) \cap D\right)$ is compact, we also get that $\llbracket \mathscr{D}[D]\rrbracket \subseteq  D$. Next, more generally, for $D = \bigcap_{k = 1}^{\infty}D_k$ where $\{D_k\}_{k \in \mathbb{N}} \subseteq \mathcal{E}^*$, note that because $\mathcal{E}^*$ is closed with respect to intersections we can assume that $\{D_k\}_{k \in \mathbb{N}}$ is decreasing. Then, for every $\omega \in \Omega$ the sequence $\{\pi_{\mathbb{R}_+}\left(\left(\{\omega\} \times \mathbb{R}_+\right) \cap D_k\right)\}_{k \in \mathbb{N}}$ is decreasing and its elements are compact, hence $ \{\mathscr{D}[D_k]\}_{k \in \mathbb{N}}$ is increasing and $\pi_\Omega(\bigcap_{k = 1}^{\infty}D_k)) = \bigcap_{k = 1}^{\infty}\pi_\Omega(D_k)$. So,
$\llbracket \sup_{k \in \mathbb{N}}\{\mathscr{D}[D_k]\}\rrbracket \subseteq \bigcap_{k=1}^{\infty}D_k$, which also implies that $\mathscr{D}[\bigcap_{k = 1}^{\infty}D_k] 
= \sup_{k \in \mathbb{N}}\{\mathscr{D}[D_k]\}\in \mathscr{P}$.

As a last step we will find an $m^* \in \mathbb{N}^\mathbb{N}$ such that 
$\mathbb{P}(\pi_\Omega(\bigcap_{k=1}^{\infty}P_{m^*_1,...,m^*_k})) \geq \mathbb{P}^*(\pi_\Omega(P)) - \epsilon$.
For every $k \in \mathbb{N}$ and $(m_1,...,m_k) \in \mathbb{N}^k$ we define
$
M_{m_1,...,m_k}:= \bigcup_{\{n \in \mathbb{N}^{\mathbb{N}} : \hspace{0.1cm}n_1 \leq m_1,...,n_k \leq m_k\}}\bigcap_{i = 1}^{\infty}P_{n_1,...,n_i}.
$
It is straightforward that if $ m_1 \rightarrow \infty$ then $M_{m_1} \uparrow P$, and more generally, if $m_{k + 1} \rightarrow \infty$ then $M_{m_1,...,m_k,m_{k + 1}} \uparrow M_{m_1,...,m_k}$. Thus $\pi_\Omega(M_{m_1,...,m_k,m_{k+1}}) \uparrow \pi_\Omega(M_{m_1,...,m_k})$ and from \Cref{2.5} we can choose a sequence $m^* \in \mathbb{N}^{\mathbb{N}}$ such that $\mathbb{P}^*(\pi_\Omega(M_{m_1^*,...,m_k^*})) > \mathbb{P}^*(\pi_\Omega(P)) - \epsilon$. 
From the monotonicity of $P_{\{n_1,...,n_k\}}$ we have $M_{m_1^*,...,m_k^*}\subseteq P_{m_1^*,...,m_k^*}$ and $\bigcap_{k=1}^{\infty}M_{m_1^*,...,m_k^*} = \bigcap_{k=1}^{\infty}P_{m_1^*,...,m_k^*}$. It follows that 
$\mathbb{P}(\pi_\Omega(\bigcap_{k = 1}^{\infty}P_{m_1^*,...,m_k^*})) = \lim_{k \rightarrow \infty}\mathbb{P}(\pi_\Omega(P_{m_1^*,...,m_k^*})) \geq \mathbb{P}^*(\pi_\Omega(P)) - \epsilon$. Hence we can define as $\rho^{P,\epsilon} := \mathscr{D}[\bigcap_{k = 1}^{\infty}P_{m_1^*,...,m_k^*}]$.
\end{proof}

\begin{remark}\label{remark 3.3}
By choosing to work with the constant filtration $\{\mathcal{F}_t\}_{t \in \mathbb{R}_+}$, where $\mathcal{F}_t = \mathcal{F}$ for all $t \in \mathbb{R}_+$, we get that $\mathscr{P} = \mathcal{F}\otimes \mathcal{B}(\mathbb{R}_+)$. To see this note that for $A \in \mathcal{F}$ and $s_1,s_2 \in \mathbb{R}_+$ the functions $\tau_1: = s_1 \mathds{1}_A + \infty \mathds{1}_{A^c}$ and $ \tau_2: = s_2 \mathds{1}_A + \infty \mathds{1}_{A^c}$ are predictable times. So, $ A \times [s_1,s_2) = \llbracket \tau_1,\infty\llbracket \hspace{0.1cm}\setminus\hspace{0.1cm} \llbracket \tau_2,\infty\llbracket\hspace{0.1cm} \in \mathscr{P}$. Then, the measurable section (and projection) theorem follows directly from \Cref{pr sec the}.
\end{remark}

\begin{definition}
An optional time $\tau$ is called
\begin{enumerate}
    \item[i.] \emph{total inaccessible} if and only if for every predictable time $\rho$ we have $\mathbb{P}\left(\tau = \rho < \infty\right) = 0$,

    \item[ii.] \emph{accessible} if and only if there exists a sequence of predictable times $\{\rho_m\}_{m \in \mathbb{N}}$ such that $\llbracket \tau \rrbracket \subseteq \bigcup_{m = 1}^{\infty}\llbracket \rho_m \rrbracket$.
\end{enumerate}
\end{definition}

Before we continue we remind the following, their proofs are trivial and so are omitted.
\begin{itemize}
    \item For every $\tau \in \mathscr{O}$ and progressively measurable set $S$ the function with graph $\llbracket\tau\rrbracket \cap S$ is an optional time.
    
    \item Every set $O \in \mathscr{O}$ is progressively measurable.

    \item For every $\tau \in \mathscr{O}$ there exist $\tau^1$ total inaccessible and $\tau^2$ accessible such that $\llbracket \tau \rrbracket = \llbracket \tau^1 \rrbracket \cup \llbracket \tau^2 \rrbracket$. 
\end{itemize}

\begin{remark}
From the last bullet it is straight forward that, for every $\tau \in \mathscr{O}$ there exists a total inaccessible time $\tau^1$ and a sequence of predictable times $\{\rho_m\}_{m \in \mathbb{N}}$ such that $\llbracket \tau \rrbracket \subseteq \llbracket \tau^1\rrbracket \cup \left(\bigcup_{m = 1}^{\infty}\llbracket\rho_m\rrbracket\right).$
\end{remark}

\begin{definition}
A set $S \subseteq \Omega \times \mathbb{R}_+$ is called \emph{thin set} if and only if there exists a sequence of optional times $\{\tau_n\}_{n \in \mathbb{N}}$ such that $S = \bigcup_{n = 1}^{\infty}\llbracket\tau_n\rrbracket$. Specifically, $S$ is called \emph{total inaccessible thin set} if and only if every $\tau_n$ can be chosen to be total inaccessible.
\end{definition} 

\begin{lemma}\label{4.6}
For every optional set $O \in \mathscr{O}$ there exists a predictable set $P \in \mathscr{P}$ such that $O \setminus P$ is a thin set and $P\setminus O$ is a total inaccessible thin set.    
\end{lemma}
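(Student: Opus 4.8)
The plan is to decompose the optional set $O$ using the decomposition of optional times into total inaccessible and accessible parts, and then exploit the fact (the third bullet before the lemma) that $O$ is a thin set, i.e. a countable union of graphs of optional times. So first I would observe that every optional set $O$ is itself a thin set: the optional $\sigma$-algebra is generated by stochastic intervals $\llbracket \tau, \infty \llbracket$, but more to the point, here we only need the classical fact that an optional set decomposes as $O = \bigcup_{n} \llbracket \tau_n \rrbracket \cup R$ where the $\tau_n$ are optional times and $R$ is the "thick" part — actually, on reflection, this is false: a general optional set is not thin. Instead I would reduce to the relevant structure directly: the lemma is really comparing $O$ with a canonically associated predictable set.

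**The key construction** should be this. Write $O$ as (essentially) generated by stochastic intervals. The natural candidate for $P$ is built from the debut and its predictable pieces. More concretely: by \Cref{pr sec the} applied repeatedly (an exhaustion argument), there is a sequence of predictable times $\{\rho_m\}$ with $\bigcup_m \llbracket \rho_m \rrbracket$ predictable and "capturing" as much of $O$ as possible in the predictable sense, together with a total inaccessible thin remainder. Precisely, I would run the following: using predictable section, find predictable times whose graphs lie in $O$ and exhaust $\mathbb{P}^*(\pi_\Omega(O \cap \{t > 0\}))$ up to the part of $O$ that meets every $\{\omega\}$-section only at times reachable by total inaccessible times. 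Let $P_0 := \bigcup_m \llbracket \rho_m \rrbracket \in \mathscr{P}$. Then one shows $O \setminus P_0$ has all its $\omega$-sections "small" in the sense that the debut of $O \setminus P_0$ restricted appropriately is total inaccessible; decompose it via the third bullet (every optional time $= $ total inaccessible $\cup$ accessible) and absorb the accessible part back into a larger $P$. Iterating countably many times and taking the union gives a predictable $P$ with $O \setminus P$ a thin set (countable union of total inaccessible graphs in fact, since the predictable parts were already absorbed) — but the lemma asks for $O \setminus P$ thin and $P \setminus O$ total inaccessible thin, so I must be careful that the predictable times $\rho_m$ I add are only as large as needed: their graphs may stick out of $O$, but only on a total inaccessible set, because on $\{0 < \rho\}$ the approximating sequence converges strictly from below and predictable section was applied to subsets of $O$.

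**The hard part will be** the bookkeeping of the transfinite/countable exhaustion and, crucially, verifying that the "overshoot" $P \setminus O$ is *total inaccessible* thin rather than merely thin. The mechanism: whenever I enlarge $P$ by a predictable graph $\llbracket \rho \rrbracket$ obtained from \Cref{pr sec the} applied to a predictable subset $Q \subseteq O$, we have $\llbracket \rho \rrbracket \subseteq Q \subseteq O$, so that piece contributes nothing to $P \setminus O$; the only contributions to $P \setminus O$ come from the accessible parts I reabsorb, and an accessible time is itself covered by predictable graphs — so I should instead argue the other way: $P \setminus O = (P \setminus O)$ where $P$ is a union of predictable graphs, and $P \cap O^c$ is an optional (indeed predictable-graph) set on which I again invoke the total inaccessible/accessible dichotomy; its accessible part, being reachable by predictable times, can be shown to be empty modulo the construction because those predictable times would have been usable to enlarge $O$-subsets. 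Honestly, the cleanest route is to first prove the statement with "$P \setminus O$ thin" and then *upgrade*: decompose the optional set $P \setminus O$ into total inaccessible plus accessible thin parts, and show the accessible part can be moved from $P$ into (a harmless modification of) $O$'s side since accessible graphs inside $P$ are covered by predictable graphs, contradicting maximality unless already accounted for. I would then wrap up by checking measurability of $P$ (a countable union of predictable graphs is predictable, using that $\llbracket \rho \rrbracket = \llbracket \rho, \rho \rrbracket \in \mathscr{P}$) and that the indexing of the exhaustion is genuinely countable, which follows because at each stage we gain a fixed fraction of the remaining outer measure, exactly as in the proof of \Cref{pr sec the}.
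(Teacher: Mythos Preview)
Your approach has a fundamental gap. You propose to build $P$ as a countable union $\bigcup_m \llbracket \rho_m \rrbracket$ of predictable graphs obtained by exhaustion via \Cref{pr sec the}. But such a $P$ is itself a \emph{thin} set, so for any optional $O$ with uncountable $\omega$-sections (for instance $O = \Omega \times [0,1]$, which is already predictable) the difference $O \setminus P$ has uncountable sections and cannot be thin. The lemma is not asserting that $O$ is close to a thin predictable set; it is asserting that $O$ is close to a (possibly very thick) predictable set, with thin error. A second, related problem: \Cref{pr sec the} takes as input a \emph{predictable} set, so you cannot apply it to $O$ or to subsets of $O$ without first producing a large predictable subset of $O$, which is exactly what the lemma is meant to supply. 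Your exhaustion never gets started.

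The paper's argument is entirely different and does not use \Cref{pr sec the} at all. It is a monotone class (Dynkin) argument on the generators of $\mathscr{O}$: for each optional time $\tau$ one has $\llbracket \tau, \infty \llbracket \,\setminus\, \rrbracket \tau, \infty \llbracket \,=\, \llbracket \tau \rrbracket$, so every generator differs from a predictable set by a single graph. Since the generating family is a $\pi$-system, the class of optional sets whose symmetric difference with some predictable set is contained in a thin set is a $\lambda$-system containing the generators, hence all of $\mathscr{O}$. This gives $P'$ with $O \triangle P'$ thin. Finally, decompose the optional times in that thin set into total inaccessible and accessible parts (covered by predictable graphs $\{\rho_m\}$) and set $P := P' \setminus \bigcup_m \llbracket \rho_m \rrbracket$; removing the predictable graphs forces $P \setminus O$ to lie in the total inaccessible part, while $O \setminus P$ remains thin.
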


\begin{proof}
For the optional $\sigma-$algebra we have that $\mathscr{O} := \sigma\left(\left\{\llbracket\tau,\infty\llbracket\hspace{0.1cm} : \tau \hspace{0.1cm}\text{optional time}\right\}\right)$. Obviously, for every $\tau$ optional time we have $\llbracket\tau,\infty\llbracket \hspace{0.1cm} \setminus \hspace{0.1cm} \rrbracket\tau,\infty\llbracket = \llbracket\tau\rrbracket$. So, because the family $\left\{\llbracket\tau,\infty\llbracket \hspace{0.1cm}: \tau \hspace{0.1cm}\text{optional time}\right\}$ is closed with respect to finite intersections, by an easy Dynkin class argument we have that exist a predictable set $P'$ and a thin set $S'$ such that $O \triangle P'\footnote{$O \triangle P := \left(O \setminus P\right) \cup \left(P \setminus O\right).$} \subseteq S'$. Because $O \triangle P'$ is progressively measurable it follows that is also a thin set. Next, let $\{\tau^1_n\}_{n \in \mathbb{N}}$ and $\{\rho_m\}_{m \in \mathbb{N}}$ be sequences of total inaccessible and predictable times respectively such that $O \triangle P' \subseteq \left(\bigcup_{n = 1}^{\infty}\llbracket \tau^1_n \rrbracket\right) \cup \left(\bigcup_{m = 1}^{\infty}\llbracket\rho_m\rrbracket\right)$. It is immediate that the set $P := P' \setminus \left(\bigcup_{m = 1}^{\infty}\llbracket\rho_m\rrbracket\right)$ satisfies what we want.
\end{proof}

\begin{theorem}[Optional Section]\label{4.7}
For every optional set $O \in \mathscr{O}$ and $\epsilon > 0$ there exists an optional time $\tau^{O,\epsilon}$ such that $\llbracket \tau^{O,\epsilon}\rrbracket \subseteq O$ and $\mathbb{P}^*\left(\pi_{\Omega}(O)\right) - \mathbb{P}\left(\{\tau^{O,\epsilon} < \infty\}\right) \leq \epsilon$.
\end{theorem}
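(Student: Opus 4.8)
The plan is to reduce the optional section theorem to the already-established predictable section theorem (\Cref{pr sec the}) by means of the approximation result in \Cref{4.6}. First I would invoke \Cref{4.6} to obtain a predictable set $P \in \mathscr{P}$, a thin set $S = O \setminus P = \bigcup_{n} \llbracket \tau_n \rrbracket$ (with $\tau_n \in \mathscr{O}$), and a total inaccessible thin set $T = P \setminus O = \bigcup_{n}\llbracket \sigma_n\rrbracket$ (with $\sigma_n$ total inaccessible). The idea is that $P$ agrees with $O$ up to these two thin sets, so applying predictable section to $P$ and then correcting with a few of the graphs $\llbracket\tau_n\rrbracket$ should suffice; the key technical point is that the total inaccessible part $T$ is $\mathbb{P}$-negligible in projection relative to any predictable time we extract, so it costs nothing.

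The main steps, in order, would be as follows. \textbf{Step 1.} Write $O = (P \cup S) \setminus T$ where $S = O\setminus P$ and $T = P \setminus O$, and note $\pi_\Omega(O) \subseteq \pi_\Omega(P) \cup \pi_\Omega(S)$ while $\pi_\Omega(O) \supseteq \pi_\Omega(P \setminus T)$. \textbf{Step 2.} Apply \Cref{pr sec the} to the predictable set $P$ with accuracy $\epsilon/3$, obtaining a predictable time $\rho$ with $\llbracket\rho\rrbracket\subseteq P$ and $\mathbb{P}^*(\pi_\Omega(P)) - \mathbb{P}(\{\rho<\infty\}) \le \epsilon/3$. \textbf{Step 3.} Handle the total inaccessible correction: since $\rho$ is predictable and each $\sigma_n$ is total inaccessible, $\mathbb{P}(\{\rho = \sigma_n < \infty\}) = 0$ for every $n$, hence $\mathbb{P}(\pi_\Omega(\llbracket\rho\rrbracket \cap T)) = 0$. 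Therefore the optional time $\rho' := \rho_{\{(\omega,\rho(\omega))\notin T\}}$ — equivalently the function with graph $\llbracket\rho\rrbracket\setminus T$, which is optional because $T$ is progressively measurable — satisfies $\llbracket\rho'\rrbracket\subseteq P\setminus T\subseteq O$ and $\mathbb{P}(\{\rho'<\infty\}) = \mathbb{P}(\{\rho<\infty\})$. \textbf{Step 4.} Handle the thin-set correction: on the set $\pi_\Omega(O)\setminus \pi_\Omega(\llbracket\rho'\rrbracket)$ we have $\mathbb{P}^*$-mass at most (roughly) $2\epsilon/3$, and this part of $\pi_\Omega(O)$ is covered, up to a predictable-section discrepancy already accounted for, by $\pi_\Omega(S) = \bigcup_n\pi_\Omega(\llbracket\tau_n\rrbracket)$; choosing $N$ large so that $\mathbb{P}^*\big(\bigcup_{n\le N}\pi_\Omega(\llbracket\tau_n\rrbracket)\big)$ captures all but $\epsilon/3$ of the relevant residual, restrict the graphs $\llbracket\tau_n\rrbracket$ ($n\le N$) to $O$ (legal since $O$ is progressively measurable) and to the complement of $\pi_\Omega(\llbracket\rho'\rrbracket)$ in $\Omega$, producing finitely many optional times whose graphs lie in $O$ and are $\Omega$-disjoint from $\rho'$. \textbf{Step 5.} Combine $\rho'$ and these finitely many restricted $\tau_n$'s into a single optional time $\tau^{O,\epsilon}$ (take the one attaining, say, the minimum of the values among those that are finite, using that a finite minimum of optional times with disjoint $\Omega$-supports is optional), and check $\llbracket\tau^{O,\epsilon}\rrbracket\subseteq O$ together with the final estimate.

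The delicate bookkeeping step — and the one I expect to be the main obstacle — is Step 4 together with Step 5: one must be careful that the finitely many corrections genuinely add the missing projection mass without double-counting and without the graphs escaping $O$, and that the union of the predictable time with the finitely many optional corrections is still an optional time with graph inside $O$. The cleanest way to organise this is probably to first dispose of the total inaccessible part (Step 3, which is essentially free), thereby reducing to the case $P \subseteq O$ with $O \setminus P$ thin, and then to argue: apply predictable section to $P$, and separately observe that $O = P \cup \bigcup_n \llbracket \tau_n\rrbracket$, so $\pi_\Omega(O) = \pi_\Omega(P)\cup\bigcup_n\pi_\Omega(\llbracket\tau_n\rrbracket)$ is a countable increasing union whose $\mathbb{P}^*$-continuity (\Cref{2.5}) lets us truncate at a finite stage; a routine disjointification of the finitely many pieces then yields the optional time. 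The total inaccessible hypothesis on $P\setminus O$ is exactly what is needed so that subtracting $T$ from $\llbracket\rho\rrbracket$ does not lose any projection mass, which is why \Cref{4.6} is stated with that asymmetry.
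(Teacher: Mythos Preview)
Your proposal is correct and follows essentially the same route as the paper: invoke \Cref{4.6}, apply predictable section to $P$, strip the total inaccessible thin set $T$ from the resulting predictable graph at zero cost (by the very definition of total inaccessibility), and then adjoin finitely many of the optional graphs $\llbracket\tau_n\rrbracket\subseteq O\setminus P$ to recover the remaining projection mass. The paper uses $\epsilon/2$ rather than $\epsilon/3$ and simply sets $\tau^{O,\epsilon}:=\rho\wedge\min_{n\le N}\tau_n$ without any disjointification (since both graphs already lie in $O$ and the estimate only needs the union of the events $\{\rho<\infty\}$ and $\{\tau<\infty\}$), but these are cosmetic differences.
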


\begin{proof}
From \Cref{4.6} there exists a predictable set $P$ such that $O \setminus P$ is a thin set and $P\setminus O$ is a total inaccessible thin set.
From \Cref{pr sec the} there is a predictable stopping time $\rho^{P,\frac{\epsilon}{2}}$ such that $\llbracket \rho^{P,\frac{\epsilon}{2}}\rrbracket \subseteq P$ and $\mathbb{P}^*\left(\pi_{\Omega}(P)\right) - \mathbb{P}\left(\{\rho^{P,\frac{\epsilon}{2}} < \infty\}\right) \leq \frac{\epsilon}{2}$. Let $\{\tau_n\}_{n \in \mathbb{N}}$ be optional times such that $O \setminus P = \bigcup_{n = 1}^{\infty} \llbracket \tau_n \rrbracket$. We have $\pi_{\Omega}(O \setminus P) = \bigcup_{n = 1}^{\infty}\left\{\tau_n < \infty\right\} \in \mathcal{F}$. So, we pick $N \in \mathbb{N}$ large enough such that $\mathbb{P}\left(\pi_{\Omega}(O \setminus P)\right) - \mathbb{P}(\bigcup_{n = 1}^{N}\left\{\tau_n < \infty\right\}) \leq \frac{\epsilon}{2}$. Next, we define as $\rho$ the optional time with graph $\llbracket \rho^{P,\frac{\epsilon}{2}} \rrbracket \cap \left(P\setminus O\right)^c$ and $\tau := \min_{n \in \{1,...N\}}\{\tau_n\}$. We have that $\mathbb{P}^*\left(\pi_{\Omega}(O \cap P)\right) - \mathbb{P}\left(\{\rho < \infty\}\right) \leq \frac{\epsilon}{2}$, and \begin{align*}
\mathbb{P}^*\left(\pi_{\Omega}(O)\right) &\leq \mathbb{P}^*\left(\pi_{\Omega}(O \cap P)\right) + \mathbb{P}\left(\pi_{\Omega}(O \setminus P) \setminus \left(\{\rho < \infty\} \cap \{\tau < \infty\}\right)\right) \\
&\leq \mathbb{P}\left(\{\rho < \infty\}\right) + \mathbb{P}\left(\{\tau < \infty\} \setminus \{\rho < \infty\}\right) + \epsilon = \mathbb{P}\left(\{\rho < \infty\}\cup\{\tau < \infty\}\right) + \epsilon.
\end{align*}
Hence, we can define as $\tau^{O,\epsilon} := \rho \wedge \tau$.
\end{proof}

\begin{remark}
One can prove the accessible section theorem, which is the analog of \Cref{4.7} for the accessible $\sigma-$algebra $\mathcal{A} := \sigma\left(\left\{\llbracket\tau,\infty\llbracket: \tau \hspace{0.1cm}\text{accessible time}\right\}\right)$ and the accessible times, exactly the same way as above.
\end{remark}

\end{document}